\newtheorem{theorem}{Theorem}
\newtheorem{proposition}[theorem]{Proposition}
\newtheorem{corollary}[theorem]{Corollary}
\theoremstyle{definition}
\newtheorem{definition}[theorem]{Definition}
\newtheorem{example}[theorem]{Example}
\newtheorem{examples}[theorem]{Examples}
\newtheorem{remark}[theorem]{Remark}
\numberwithin{theorem}{section}
\numberwithin{equation}{section}
\newcommand{\RR}{\mathbb{R}}
\newcommand{\id}{\mathrm{id}}
\newcommand{\pr}{\mathrm{pr}}
\newcommand{\ev}{\mathrm{ev}}
\renewcommand{\emptyset}{\text{\O}}
\begin{document}

\title{Homotopies and the universal fixed point property}

\author{Markus Szymik}

\date{October 2013}

\maketitle

\renewcommand\abstractname{}

\begin{abstract}
\noindent
A topological space has the fixed point property if every continuous self-map of that space has at least one fixed point. We demonstrate that there are serious restraints imposed by the requirement that there be a choice of fixed points that is continuous whenever the self-map varies continuously. To even specify the problem, we introduce the universal fixed point property. Our results apply in particular to the analysis of convex subspaces of Banach spaces, to the topology of finite-dimensional manifolds and~CW complexes, and to the combinatorics of Kolmogorov spaces associated with finite posets.
\newline\phantom{ }\newline
2010 MSC:
54H25, 
55M20, 
47H10. 
\end{abstract}


Every continuous self-map of a non-empty, compact, convex subspace of a topological vector space has at least one fixed point. This theorem of Brouwer-Schauder-Tychonoff is a milestone of fixed point theory, and the applications of it are numerous. Whenever the existence of fixed points is guaranteed for every self-map, the space is called a fixed point space. This property is shared by many other spaces besides the ones already covered by that famous result.

But, what if the self-map varies continuously? Is there also a continuous choice of fixed points? This is the question that we address here. It is surprising that there is no previous literature on this topic in the general topological setting, despite the property being clearly desirable: in practice, it is genuinely less useful to know the existence of a solution to a problem, if this solution does not depend continuously on the initial conditions. The Banach fixed point theorem provides a class of examples where we do know that the fixed points depend continuously on the initial conditions (Example~\ref{ex:Banach}). However, the result evidently applies to metric spaces and contractions only. 

Additional requirements on fixed point spaces that guarantee the existence of continuous choices of fixed points in general are made precise here and lead up to the notion of {\em universal} fixed point spaces~(Definition~\ref{def:ufpp}). Among other things, we give a characterization of these in terms of mapping spaces~(Theorem~\ref{thm:criterion}) which is also the basis for the discussion of some of our later examples. We show that the universal fixed point property is rather strong. The only convex subspaces of Banach spaces which are universal fixed point spaces are singletons~(Theorem~\ref{thm:Banach_homotopy}), and similarly for finite-dimensional manifolds and~CW complexes~(Corollary~\ref{cor:manifolds_and_CW_homotopy}). Nonetheless, many examples of universal fixed point spaces other than singletons do exist, and some of them are not even contractible. For example, there are Kolmogorov spaces associated with finite posets that have these properties~(Example~\ref{ex:not_contractible}).

Unless otherwise stated, all vector spaces will be over the scalar field~$\RR$ of real numbers.


\section{A brief review of the fixed point property}\label{sec:fpp}

In this section, we recall the basic definitions and some immediate consequences that we need.

\begin{definition}{\bf (Fixed point property)}
A topological space~$X$ has the {\it fixed point property}, if every continuous self-map~$f\colon X\to X$ has a fixed point. One also says that~$X$ is a {\it fixed point space}.
\end{definition}

\begin{example}
By Brouwer's theorem~\cite{Brouwer}, all disks~$D^n=\{x\in\RR^n|\|x\|\leqslant1\}$ are fixed point spaces. 
\end{example}

The fixed point property has a long history, see~\cite[p.120]{Bing} for a brief and readable account of the origins of Brouwer's result. Some years later, Schauder conjectured that all non-empty, compact, and convex subspaces of topological vector spaces have the fixed point property, but he could only prove it for Banach spaces, see~\cite{Schauder}. Not long thereafter, Tychonoff, in~\cite{Tychonoff}, generalised this result to locally convex topological vector spaces. Only rather recently however, in a truly remarkable work~\cite{Cauty}, Robert Cauty proved Schauder's conjecture in full generality, so that we are now able to record:

\begin{example}
All non-empty, compact, convex subspaces of topological vector spaces have the fixed point property.
\end{example}

Contrary to what the preceding examples might suggest, there are also many interesting topological spaces which do not have the fixed point property.

\begin{example}
None of the spheres~$S^{n-1}=\{x\in\RR^n|\|x\|=1\}$ for~$n\geqslant0$ is a fixed point space. 
In particular, of course, the empty set ($n=0$) is not a fixed point set. The spheres are in some sense the opposite of the contractibles. The first example of a compact contractible space which does not have the fixed point property is due to~Kinoshita, see~\cite{Kinoshita}.
\end{example}

Evidently, when~$X$ and~$Y$ are homeomorphic, and~$Y$ is a fixed point space, then so is~$X$. Thus, being a fixed point space is a homeomorphism invariant. It is not a homotopy invariant, as not all contractible spaces are fixed point spaces, while singleton spaces are.

We reproduce a well-known proposition and its corollaries for the purpose of later generalization.

\begin{proposition}\label{prop:retracts}
All topological spaces~$X$ that are retracts of a fixed point space~$Y$, in the sense that there are two continuous maps~$s\colon X\to Y$ and~$r\colon Y\to X$ such that~$rs=\id_X$, are fixed point space as well.
\end{proposition}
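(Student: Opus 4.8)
The plan is to show directly that every continuous self-map of $X$ has a fixed point by transporting it to $Y$, using the splitting $rs=\id_X$. Given a continuous map $f\colon X\to X$, consider the composite $g = s f r\colon Y\to Y$, which is continuous because $s$, $f$, and $r$ all are. Since $Y$ is a fixed point space, there exists a point $y\in Y$ with $g(y)=y$, that is, $sfr(y)=y$.

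Next I would produce the fixed point of $f$ from $y$. The natural candidate is $x = r(y)\in X$. Applying $r$ to the equation $sfr(y)=y$ and using $rs=\id_X$ gives $r(y) = rsfr(y) = fr(y)$, so $x=r(y)$ satisfies $f(x)=x$. Hence $f$ has a fixed point, and since $f$ was arbitrary, $X$ is a fixed point space.

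There is essentially no obstacle here: the argument is a two-line diagram chase, and the only thing to be careful about is that continuity of $g=sfr$ relies solely on the continuity of the three given maps, and that the retraction identity $rs=\id_X$ is used in exactly the right place (after applying $r$, not $s$). One could optionally remark that this is the standard argument showing the fixed point property passes to retracts, and that it is precisely this proof which the paper will later want to upgrade to the parametrized (homotopical) setting, where the subtlety of making the choice $y\mapsto r(y)$ continuous in a family of maps $f$ becomes the central issue.
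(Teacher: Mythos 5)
Your argument is correct and is exactly the standard retract argument: transport $f$ to $g=sfr$ on $Y$, take a fixed point $y$ of $g$, and check that $r(y)$ is fixed by $f$ via $rs=\id_X$. The paper omits the proof of this well-known proposition but proves its parametrized generalization (Proposition~\ref{prop:XY_retracts}) by precisely the same scheme, so your approach matches the paper's.
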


\begin{corollary}
Fixed point spaces are connected.
\end{corollary}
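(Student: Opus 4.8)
The plan is to argue by contradiction: suppose $X$ is a fixed point space that is not connected. Then I would write $X = U \sqcup V$ as a disjoint union of two non-empty open (hence also closed) subsets. The strategy is to produce a continuous self-map of $X$ with no fixed point, contradicting the fixed point property. The obvious candidate is a map that "swaps" the two clopen pieces, but the pieces need not be homeomorphic, so a literal swap is unavailable; instead I would pick a point $u_0 \in U$ and a point $v_0 \in V$ and define $f \colon X \to X$ by $f(x) = v_0$ for $x \in U$ and $f(x) = u_0$ for $x \in V$.

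Next I would verify that $f$ is continuous: it is locally constant, being constant on each of the two clopen sets $U$ and $V$ whose union is $X$, so continuity follows from the gluing lemma for open covers. Then I would check that $f$ has no fixed point: if $x \in U$ then $f(x) = v_0 \in V$, so $f(x) \neq x$ since $U$ and $V$ are disjoint; symmetrically for $x \in V$. This contradicts the assumption that $X$ is a fixed point space, completing the argument.

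Alternatively, and perhaps more in the spirit of the preceding Proposition, one can phrase this via retracts: a non-connected space $X$ admits a continuous surjection onto the two-point discrete space $S^0$, and $S^0$ is a retract of $X$ (choose one preimage point in each fiber as the section). Since $S^0$ is not a fixed point space — the non-identity involution has no fixed point — Proposition~\ref{prop:retracts} shows $X$ cannot be a fixed point space either. I expect the main (though very minor) obstacle to be merely bookkeeping: handling the degenerate possibility $X = \emptyset$ cleanly, which is already covered since the empty set is explicitly noted above not to be a fixed point space, and it is also not connected, so the implication holds vacuously in that case.
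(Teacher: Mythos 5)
Your proof is correct, and the paper itself gives no proof of this corollary; its placement after the proposition on retracts signals exactly your alternative argument ($S^0$ is a retract of any disconnected space but is not a fixed point space). Your direct construction of the fixed-point-free map swapping base points of the two clopen pieces is just that same argument unwound, and your handling of the empty set matches the paper's convention.
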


\begin{corollary}\label{cor:products}
For any product~$X\times Y$ that is a fixed point space, also both factors~$X$ and~$Y$ are fixed point spaces.
\end{corollary}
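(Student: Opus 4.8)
The plan is to realize each of the two factors as a retract of the product and then to invoke Proposition~\ref{prop:retracts}.

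First I would record that every fixed point space is non-empty: the unique (empty) self-map of the empty space has no fixed point. Hence, if $X\times Y$ is a fixed point space, then it is non-empty, and therefore both~$X$ and~$Y$ are non-empty. I would then fix points~$x_0\in X$ and~$y_0\in Y$, which is the only place in the argument where non-emptiness is actually used.

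Next I would consider the projection~$r=\pr_X\colon X\times Y\to X$ together with the map~$s\colon X\to X\times Y$, $s(x)=(x,y_0)$. Both maps are continuous, and~$rs=\id_X$ by construction, so~$X$ is a retract, in the sense of Proposition~\ref{prop:retracts}, of the fixed point space~$X\times Y$. That proposition then gives that~$X$ is a fixed point space. Interchanging the roles of the two factors, that is, using~$r'=\pr_Y$ and~$s'(y)=(x_0,y)$, shows in the same way that~$Y$ is a fixed point space.

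There is essentially no obstacle here once Proposition~\ref{prop:retracts} is available; the corollary is purely formal. The only subtlety worth flagging is precisely the one mentioned above: the sections~$s$ and~$s'$ cannot be constructed unless the factors are non-empty, and this is guaranteed exactly by the hypothesis that~$X\times Y$ be a fixed point space.
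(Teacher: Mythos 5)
Your proof is correct and is exactly the argument the paper intends: realize each factor as a retract of the product via a section $x\mapsto(x,y_0)$ and the projection, then apply Proposition~\ref{prop:retracts}. The remark that non-emptiness of the factors follows from the product being a fixed point space is a worthwhile detail that the paper leaves implicit.
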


The converse does not hold. The paper~\cite{Husseini} contains examples where~$X$ and~$Y$ are manifolds. See also~\cite{Brown} and the references therein for other examples.

We close this section with a result on separation properties of fixed point spaces. Fixed point spaces need not be Hausdorff spaces, but they necessarily have to satisfy some weaker separation axiom, as the next proposition shows. Recall that a topological space is a {\it Kolmogorov space} if and only if for every pair of distinct points, at least one of them has an open neighbourhood which does not contain the other.

\begin{proposition}\label{prop:Kolmogorov}
All topological spaces that have the fixed point property are Kolmogorov spaces.
\end{proposition}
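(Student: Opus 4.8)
The plan is to argue by contraposition: assuming that $X$ is not a Kolmogorov space, I will exhibit a continuous self-map of $X$ that has no fixed point. By definition, the failure of the Kolmogorov axiom means that there are two distinct points $a\neq b$ in $X$ that are topologically indistinguishable, in the sense that every open set of $X$ contains both of them or neither. The key elementary observation is then that the subspace $A=\{a,b\}\subseteq X$ carries the indiscrete topology (its only open subsets are~$\emptyset$ and~$A$), together with the fact that every map from an arbitrary space into an indiscrete space is automatically continuous.

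From this I would first record that $A$ is a retract of~$X$ in the sense of Proposition~\ref{prop:retracts}: the inclusion $s\colon A\hookrightarrow X$ is continuous as always, and the map $r\colon X\to A$ sending $a$ to~$a$ and every other point of~$X$ (in particular~$b$) to~$b$ is continuous because its target~$A$ is indiscrete; clearly $rs=\id_A$. Hence, by Proposition~\ref{prop:retracts}, if $X$ were a fixed point space, then so would be~$A$. But $A$ is not a fixed point space: the transposition interchanging~$a$ and~$b$ is a continuous self-map of the indiscrete two-point space~$A$ (again because the target is indiscrete, or directly because the only open sets $\emptyset$ and~$A$ are visibly preimage-invariant), and it evidently has no fixed point. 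This contradiction finishes the proof.

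I do not expect a genuine obstacle here; the only point that requires a little care is that the naive candidate --- transposing $a$ and $b$ while fixing every other point of~$X$ itself --- is useless, because it fixes all the remaining points of~$X$. The argument works only after passing to the two-point subspace~$A$, where the transposition really is fixed-point free, and this passage is legitimate precisely because topological indistinguishability forces~$A$ to be indiscrete. If one prefers to avoid invoking Proposition~\ref{prop:retracts}, one can instead check directly that the map $f\colon X\to X$ with $f(a)=b$ and $f(x)=a$ for all $x\neq a$ is continuous --- it factors as $X\to A\hookrightarrow X$ through the indiscrete subspace~$A$ --- and has no fixed point, since $f(a)=b\neq a$ and $f(x)=a\neq x$ for $x\neq a$.
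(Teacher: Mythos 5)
Your proof is correct and rests on the same idea as the paper's: topological indistinguishability of two points $a\neq b$ makes any map landing in $\{a,b\}$ continuous, and the direct map you describe at the end ($f(a)=b$, $f(x)=a$ for $x\neq a$) is exactly the fixed-point-free self-map the paper uses. The detour through the retract $A=\{a,b\}$ is a harmless repackaging of the same observation.
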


\begin{proof}
By definition, if~$X$ is not a Kolmogorov space, then it contains two points~$x\not=x'$ such that all open subsets of~$X$ contain either both of them or none of them. This means that the self-map~$f\colon X\to X$ which sends~$x$ to~$x'$ and everything else to~$x$ is continuous. And~$f$ does not have a fixed point.
\end{proof}


\section{Continuous families of fixed points}\label{sec:families}

We are now addressing the question that forces itself on us: How do the fixed points of a continuous self-map depend upon the given self-map? The following is a well-known positive example, where the fixed points also vary continuously.

\begin{example}\label{ex:Banach}
Let~$(X,d)$ be a non-empty complete metric space, and let~$K$ be a real constant with~$0\leqslant K<1$. A self-map~$f\colon X\to X$ that satisfies~\hbox{$d(f(x),f(y))\leqslant Kd(x,y)$} for all~$x$ and~$y$ is called a~{\it$K$-contraction} on~$X$. The Banach fixed point theorem~\cite{Banach}, in its usual form, says that every~$K$-contraction~$f$ on~$X$ has a unique fixed point~\hbox{$p=p(f)$} in~$X$. Even more is true: The fixed points depend continuously on~$f$! More precisely, if~$(f_n)$ is a sequence of~$K$-contractions that converges uniformly to a~$K$-contraction~$f$, then the fixed points~$p_n=p(f_n)$ converge to~$p=p(f)$. In fact, the triangle equality implies
\begin{align*}
d(p_n,p)
&=d(f_n(p_n),f(p))\\
&\leqslant d(f_n(p_n),f_n(p))+d(f_n(p),f(p))\\
&\leqslant Kd(p_n,p)+d(f_n(p),f(p)),
\end{align*}
and this leads to the inequality~$d(p_n,p)\leqslant(1-K)^{-1}d(f_n,f)$, so that the uniform convergence~\hbox{$d(f_n,f)\to0$} of the self-maps implies the convergence~$d(p_n,p)\to0$ of the fixed points.
\end{example}

The Banach fixed point theorem evidently applies to metric spaces and contractions only. In general, sequences and their limits will have to be replaced by general topological concepts. We will now begin to make our set-up more precise.

\begin{definition}\label{def:cfs-m}
{\bf (Continuous family of self-maps)}\label{def:continuous_family_of_self-maps} Let~$X$ be a topological space. A {\it continuous family of self-maps} of~$X$ is a continuous map
\[
f\colon T\times X\longrightarrow X,
\]
where~$T$ is another topological space. 
\end{definition}

\begin{remark}\label{rem:bundles}
Equivalently, a continuous family of self-maps of~$X$ is the second component of a continuous self-map~$(\id_T,f)$ of~$T\times X$ which respects the projection to~$T$, the first one necessarily being the identity on~$T$. This point of view has the advantage of being closer to the idea of a self-map, but the disadvantage of carrying the redundant component~$\id_T$ around. However, this alternative suggests, as a generalization, to consider self-maps over~$T$ of total spaces~$E$ of fibre bundles~$E\to T$ with fibre~$X$ over~$T$. We do not pursue this idea here, as already the product situation turns out to be rather restrictive on~$X$. 
\end{remark}

A continuous family~$f\colon T\times X\to X$ of self-maps defines for each~$t$ in~$T$ a self-map
\[
f_t\colon X\longrightarrow X,\,x\longmapsto f(t,x),
\]
on~$X$ which is continuous. We use this notation throughout this text.


\begin{definition}{\bf (Continuous family of fixed points)}\label{def:continuous_fixed_point}
Let~$f\colon T\times X\to X$ be a continuous family of self-maps of~$X$. A {\it continuous family of fixed points} of~$f$ is a continuous map
\[
p\colon T\longrightarrow X
\]
such that
\begin{equation}\label{eq:cont_fam_of_fp}
f(t,p(t))=p(t)
\end{equation}
for all~$t$ in~$T$.
\end{definition}

The condition~\eqref{eq:cont_fam_of_fp} means that~$p(t)$ is a fixed point of~$f_t$ for all~$t$ in~$T$.

\begin{remark}
In the spirit of Remark~\ref{rem:bundles}, a continuous family~$p$ of fixed points of~$f$ is the second component of a continuous section~$(\id_T,p)$ of the projection~$\pr_T$ to~$T$ such that~$(\id_T,f)(\id_T,p)=(\id_T,p)$. Again, the first component necessarily is the identity on~$T$.
\end{remark}


\begin{example}
Just as every point of a topological space is a fixed point of some continuous self-map, for example of the constant self-map, or of the identity, so is every continuous map~\hbox{$p\colon T\to X$} a continuous family of fixed points for some continuous family~\hbox{$f\colon T\times X\to X$} of self-maps. For example, take the family~$f$ defined by~\hbox{$f(t,x)=p(t)$}, which is the family of constant self-maps, of course, or~$f(t,x)=x$, the constant family where every map~$f_t$ is the identity~$\id_X$. 
\end{example}

Example~\ref{ex:counterexample} shows that continuous families of fixed points need not exist in general.


\begin{definition}\label{def:ffpT}
{\bf (Fixed point property with respect to a topological space)} A topological space~$X$ has the {\it fixed point property with respect to a topological space}~$T$ if for all continuous families~$f\colon T\times X\to X$ of self-maps of~$X$, there exists at least one continuous family~$p\colon T\to X$ of fixed points. 
\end{definition}

\begin{examples}
Trivially, if~$T$ is empty, then all topological spaces~$X$ have the fixed point property with respect to~$T$. A topological spaces~$X$ has the fixed point property with respect to a singleton if and only if it is a fixed point space. More generally, the same statement holds true when we replace the singleton by any non-empty discrete space.
\end{examples}

In the rest of this text, we  consider the cases when~$X$ has the fixed point property with respect to~$T=I$, the unit interval, and the case when~$X$ has the fixed point property with respect to all topological spaces, respectively. Before we do so, we prove some helpful results on retracts.


\section{Retracts}

In this short section, we prove that the fixed point property of a topological space~$X$   with respect to a topological space~$T$ behaves well under retracts in~$T$ and in~$X$.

\begin{proposition}\label{prop:ST_retracts}
If a topological space~$X$ has the fixed point property with respect to a topological space~$T$, and~$S$ is a retract of~$T$, then~$X$ also has the fixed point property with respect to~$S$. 
\end{proposition}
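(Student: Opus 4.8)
The plan is to transport the problem from $S$ up to $T$ along the retraction, solve it there using the hypothesis, and then restrict the solution back down to $S$ along the section. Write $\iota\colon S\to T$ and $\rho\colon T\to S$ for continuous maps witnessing that $S$ is a retract of $T$, so that $\rho\iota=\id_S$. Let $g\colon S\times X\to X$ be an arbitrary continuous family of self-maps of $X$; the task is to produce a continuous family of fixed points of $g$.

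First I would form the pulled-back family $f=g\circ(\rho\times\id_X)\colon T\times X\to X$, which is continuous, being a composite of continuous maps. Since $X$ has the fixed point property with respect to $T$, there is a continuous family $q\colon T\to X$ with $f(t,q(t))=q(t)$, that is, $g(\rho(t),q(t))=q(t)$ for every $t$ in $T$. Next I would set $p=q\circ\iota\colon S\to X$, which is again continuous. For each $s$ in $S$, substituting $t=\iota(s)$ into the fixed point equation for $q$ and using $\rho\iota=\id_S$ yields $g(s,p(s))=g(\rho(\iota(s)),q(\iota(s)))=q(\iota(s))=p(s)$, so that $p$ is a continuous family of fixed points of $g$. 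As $g$ was arbitrary, this shows that $X$ has the fixed point property with respect to $S$.

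There is no substantive obstacle here; the only point requiring care is the variance of the retract data. One pulls the family \emph{back} along the retraction $\rho$ (which runs $T\to S$ and hence induces $T\times X\to S\times X$) and pushes the resulting family of fixed points \emph{forward} along the section $\iota$, after which the identity $\rho\iota=\id_S$ performs exactly the bookkeeping needed to identify $g$ with itself in the final verification. In the language of Remark~\ref{rem:bundles} this is simply the statement that a self-map over $T$ together with a section of fixed points restricts, along $\iota$, to a self-map over $S$ with a section of fixed points.
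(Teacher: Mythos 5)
Your argument is correct and coincides with the paper's own proof: both pull the family on $S$ back along the retraction to a family on $T$, apply the hypothesis there, and restrict the resulting family of fixed points along the section, using $\rho\iota=\id_S$ in the final verification. The only difference is notational (the paper's $f$ and $g$ are your $g$ and $f$), and you spell out the check that the paper leaves to the reader.
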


\begin{proof}
Choose continuous maps~$j\colon S\to T$ and~$r\colon T\to S$ that satisfy the condition~\hbox{$rj=\id_S$}. If~$f\colon S\times X\to X$ is a continuous family of self-maps of~$X$ parametrized by~$S$ then consider the continuous family~$g=f(r\times\id_X)$ of self-maps of~$X$ parametrized by~$T$. By hypothesis of the proposition, it has a continuous family~\hbox{$q\colon T\to X$} of fixed points. It is now easy to check that~$p=qj$ a continuous family of fixed points for~$f$.
\end{proof}

Proposition~\ref{prop:retracts} and its corollaries transfer to the present context as follows.

\begin{proposition}\label{prop:XY_retracts}
If a topological space~$X$ is a retract of a topological space~$Y$ which has the fixed point property with respect to a topological space~$T$, then~$X$ also has the fixed point property with respect to~$T$. 
\end{proposition}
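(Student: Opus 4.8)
The plan is to imitate the proof of Proposition~\ref{prop:retracts}, transporting a family of self-maps of~$X$ up to~$Y$ along the retraction data, solving the fixed point problem there using the hypothesis on~$Y$, and then pushing the resulting family of fixed points back down to~$X$. To set up notation, fix continuous maps~$s\colon X\to Y$ and~$r\colon Y\to X$ with~$rs=\id_X$ witnessing that~$X$ is a retract of~$Y$.

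Given a continuous family~$f\colon T\times X\to X$ of self-maps of~$X$, I would form the continuous family~$g=s\circ f\circ(\id_T\times r)\colon T\times Y\to Y$, which is a continuous family of self-maps of~$Y$ because it is a composite of continuous maps; explicitly,~$g(t,y)=s(f(t,r(y)))$. By the assumption that~$Y$ has the fixed point property with respect to~$T$, the family~$g$ admits a continuous family~$q\colon T\to Y$ of fixed points, so that~$s(f(t,r(q(t))))=q(t)$ for every~$t$ in~$T$.

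I would then set~$p=rq\colon T\to X$, which is continuous as a composite. Applying~$r$ to the fixed point equation for~$q$ and using~$rs=\id_X$ yields~$f(t,r(q(t)))=r(q(t))$, that is,~$f(t,p(t))=p(t)$ for all~$t$ in~$T$, so that~$p$ is a continuous family of fixed points of~$f$. This completes the argument. There is no real obstacle here; the only subtlety worth flagging is that, in contrast with Proposition~\ref{prop:ST_retracts}, where the retract was taken in the parameter space~$T$ and the self-map family had to be precomposed with~$r\times\id_X$, the retract here is taken in the space variable, so one instead sandwiches~$f$ between~$\id_T\times r$ and~$s$, and the same map~$r$ serves both to build~$g$ and to recover the fixed points of~$f$ from those of~$g$.
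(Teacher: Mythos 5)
Your argument is correct and coincides with the paper's own proof: the same family $g=s\circ f\circ(\id_T\times r)$, the same choice $p=rq$, and the same verification by applying $r$ to the fixed point equation and using $rs=\id_X$.
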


\begin{proof}
If the topological space~$X$ is a retract of the topological space~$Y$, then we choose continuous maps~\hbox{$s\colon X\to Y$} and~$r\colon Y\to X$ that satisfy the condition~$rs=\id_X$. Let~\hbox{$f\colon T\times X\to X$} be a continuous family of self-maps of~$X$. We now consider the continuous family~$g=sf(\id_T\times r)$ of self-maps of~$Y$. By hypothesis, it has a continuous family~$q$ of fixed points. This means
\begin{equation}\label{eq:XY_retracts}
sf(t,rq(t))=q(t)
\end{equation} 
for all~$t$ in~$T$. It is then easy to check that~$p=rq$ is a continuous family of fixed points of~$f$: Apply~$r$ to~\eqref{eq:XY_retracts} and use the retraction property~\hbox{$rs=\id_X$}.
\end{proof}


\section{The homotopy fixed point property}\label{sec:homotopy}

In this section we discuss the behavior of fixed points with respect to homotopies.

\begin{definition}\label{def:hfpp}{\bf (Homotopy fixed point property)} We say that a topological space~$X$ has the {\it homotopy fixed point property} if it has the fixed point property with respect to the unit interval~\hbox{$T=I=[0,1]$}.
\end{definition}

This means that a topological space~$X$ has the homotopy fixed point property if for all homotopies~\hbox{$f\colon I\times X\to X$} in~$X$ there is a continuous path~$p\colon I\to X$ such that~$p(t)$ is a fixed point of~$f_t$ for all~$t$ in~$I$.

\begin{remark}
We take care not to call a space which has the homotopy fixed point property a {\it homotopy fixed point space}, as this terminology conflicts with a different idea with the same name.
\end{remark}

There are many other examples of topological spaces which have the homotopy fixed point property, even non contractible ones. But, for the rest of this section we focus on counterexamples.

\begin{proposition}
All topological spaces that have the homotopy fixed point property are fixed point spaces. 
\end{proposition}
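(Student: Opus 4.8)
The plan is to deduce the fixed point property from the homotopy fixed point property by applying the latter to a constant family. Suppose $X$ has the homotopy fixed point property, and let $g\colon X\to X$ be any continuous self-map; we must produce a fixed point of $g$. The idea is that a single self-map can be viewed as a continuous family parametrized by $I$ in the trivial way, namely $f\colon I\times X\to X$, $f(t,x)=g(x)$, which is continuous because it is the composite of the projection $I\times X\to X$ with $g$. By the homotopy fixed point property there is a continuous path $p\colon I\to X$ with $f(t,p(t))=p(t)$ for all $t\in I$, that is, $g(p(t))=p(t)$ for all $t$. In particular $p(0)$ is a fixed point of $g$, so $g$ has a fixed point and $X$ is a fixed point space.

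Alternatively, and perhaps more in keeping with the structural results already available, one can argue via Proposition~\ref{prop:ST_retracts}: a singleton $\{0\}$ is a retract of the unit interval $I$ (take $j\colon\{0\}\to I$ the inclusion and $r\colon I\to\{0\}$ the constant map, so that $rj=\id_{\{0\}}$). Hence, if $X$ has the fixed point property with respect to $I$, it has the fixed point property with respect to a singleton, which by the Examples following Definition~\ref{def:ffpT} is precisely the statement that $X$ is a fixed point space.

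There is essentially no obstacle here; the only point to check is the trivial continuity of the constant family $f(t,x)=g(x)$ (or, in the second approach, that a singleton retracts off $I$), and the rest is immediate. I would present the first argument, since it is self-contained and makes transparent why the implication holds, and perhaps remark that it also follows formally from Proposition~\ref{prop:ST_retracts}.
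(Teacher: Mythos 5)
Your proposal is correct, and your second argument (a singleton is a retract of $I$, so Proposition~\ref{prop:ST_retracts} applies) is exactly the proof given in the paper. Your first, direct argument via the constant family $f(t,x)=g(x)$ is just an elementary unwinding of the same idea and is equally valid.
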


\begin{proof}
This result is an immediate consequence of Proposition~\ref{prop:ST_retracts}, since a singleton is a retract of the unit interval.
\end{proof}

The converse does not hold: the homotopy fixed point property is more restrictive than the ordinary fixed point property, as this example shows:

\begin{example}\label{ex:counterexample}
Continuous families of fixed points need not exist. Let~$X$ be the fixed point space~$[0,3]$, and let~$T$ be the space~$[0,2]$, both homeomorphic to the unit interval. Then
\[
f_t(x)=
\begin{cases}
1 & x\leqslant t\\
x & t\leqslant x\leqslant t+1\\
2 & t+1\leqslant x\\
\end{cases}
\]
is a continuous family of self-maps.

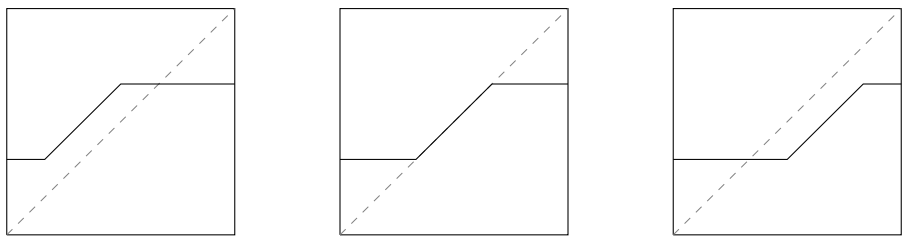
\begin{figure}[H]
\caption{Graphs of the functions~$f_t$ for~$t=1/2$,~$t=1$, and~$t=3/2$}
\begin{center}
\begin{tikzpicture}
\draw (0,0) -- (3,0) -- (3,3) -- (0,3) -- (0,0) ;
\draw[very thin,color=gray,dashed] (0,0) -- (3,3) ;
\draw (0,1) -- (0.5,1) -- (1.5,2) -- (3,2) ;
\end{tikzpicture}\hspace{3em}
\begin{tikzpicture}
\draw (0,0) -- (3,0) -- (3,3) -- (0,3) -- (0,0) ;
\draw[very thin,color=gray,dashed] (0,0) -- (3,3) ;
\draw (0,1) -- (1,1) -- (2,2) -- (3,2) ;
\end{tikzpicture}\hspace{3em}
\begin{tikzpicture}
\draw (0,0) -- (3,0) -- (3,3) -- (0,3) -- (0,0) ;
\draw[very thin,color=gray,dashed] (0,0) -- (3,3) ;
\draw (0,1) -- (1.5,1) -- (2.5,2) -- (3,2) ;
\end{tikzpicture}
\end{center}
\end{figure}

The fixed point sets of the maps~$f_t$ are 
\[
\{x\in[0,3]\,|\,f_t(x)=x\}=
\begin{cases}
\{2\} & t<1\\
[1,2] & t=1\\
\{1\} & t>1.\\
\end{cases}
\]
This makes it clear that there is no continuous family of fixed points. 
\end{example}

The preceding example shows that the unit interval, which is a fixed point space, does not have the homotopy fixed point property. More generally, the~$n$-disk~$D^n$ does not have the homotopy fixed point property as soon as~$n\geqslant 1$: If~$D^n$, for some~$n\geqslant1$, had the homotopy fixed point property, so had~$D^1$, as~$D^1$ is a retract of~$D^n$. Still more general is this theorem:


\begin{theorem}\label{thm:Banach_homotopy}
A convex subspace~$X$ of a Banach space~$V$ has the homotopy fixed point property if and only if it is a singleton.
\end{theorem}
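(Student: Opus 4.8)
The plan is to prove the non-trivial direction: if a convex subspace $X$ of a Banach space $V$ has the homotopy fixed point property, then $X$ is a singleton. So suppose $X$ contains two distinct points, and derive a contradiction by building an explicit homotopy $f\colon I\times X\to X$ that admits no continuous family of fixed points.

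First I would reduce to the one-dimensional case using the retract machinery of Section~4. Pick distinct points $a,b\in X$ and consider the line segment $\sigma=\{(1-s)a+sb\mid s\in[0,1]\}\subseteq X$, which is homeomorphic to $D^1=[0,1]$. The key point is that a \emph{convex} subset of a normed space is an absolute retract — or, more cheaply, one can use the Hahn–Banach theorem: choose a continuous linear functional $\varphi\in V^*$ with $\varphi(a)\ne\varphi(b)$, normalize so that $\varphi(a)=0$ and $\varphi(b)=1$, and then the map $X\to\sigma$ sending $x\mapsto(1-c(\varphi(x)))a+c(\varphi(x))b$, where $c\colon\RR\to[0,1]$ is the truncation $c(u)=\max(0,\min(1,u))$, is a continuous retraction onto $\sigma$ (it restricts to the identity on $\sigma$). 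Thus $\sigma$ is a retract of $X$, and by Proposition~\ref{prop:XY_retracts}, $\sigma$ would inherit the homotopy fixed point property. But $\sigma\cong[0,1]$, and Example~\ref{ex:counterexample} exhibits a continuous family of self-maps of $[0,1]$ (rescaled from $[0,3]$, parametrized by a rescaling of $[0,2]$, which is itself homeomorphic to $I$) with no continuous family of fixed points. This contradiction forces $X$ to be a singleton.

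For the converse direction, if $X=\{x_0\}$ is a singleton, then the only continuous map $p\colon I\to X$ is the constant map, and it trivially satisfies $f(t,p(t))=p(t)$ for any family $f$, since both sides equal $x_0$; so $X$ has the homotopy fixed point property.

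The main obstacle I anticipate is the retract step — specifically, producing the retraction $X\to\sigma$ cleanly in the stated generality. One must be slightly careful that $X$ need not be closed, bounded, or complete, so one cannot invoke compactness or completeness; the Hahn–Banach argument above sidesteps this because it only uses the linear structure and continuity of a single functional, and the truncation $c$ takes values in $[0,1]$ so the output genuinely lands in the segment $\sigma\subseteq X$ by convexity. A secondary, purely bookkeeping point is matching the parameter space and target of Example~\ref{ex:counterexample} (which uses $T=[0,2]$ and $X=[0,3]$) to the interval $I=[0,1]$ via the homeomorphism $[0,2]\cong I$ and the identification $\sigma\cong[0,3]$; this is routine rescaling and I would not belabor it.
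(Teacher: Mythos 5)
Your argument is correct and essentially identical to the paper's: both use the Hahn--Banach theorem to realize a line segment (hence a copy of the unit interval) as a retract of~$X$, and then combine Proposition~\ref{prop:XY_retracts} with Example~\ref{ex:counterexample}; your explicit truncation~$c$ merely spells out the retraction that the paper leaves implicit. The only (trivial) case your contradiction argument skips is the empty convex subspace, which is not a singleton and fails the homotopy fixed point property simply because there is no map from~$I$ to the empty space.
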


\begin{proof}
The statement is clear for the singleton. So it is if~$X$ is empty. If~$X$ has at least two points, let~$s\colon I\to V$ be the straight path which connects the two points. The Hahn-Banach theorem provides for~a map~$r\colon V\to I$ such that the composition of the restriction of~$r$ to~$X$ with~$s$ is the identity, so that~$I$ is a retract of~$X$. Since~$I$ does not have the homotopy fixed point property by the preceding Example, nether does~$X$ by Proposition~\ref{prop:XY_retracts}.
\end{proof}

\begin{example}
Recall that one model for the Hilbert cube is the set of sequen\-ces~$(x_n)$ of real numbers such that~$|x_n|\leqslant1/n$ for all~$n$. This makes it clear that it is a convex subset of the Hilbert space~$\ell^2$. It is homeomorphic to the countably infinite (Tychonoff) product of compact intervals, hence compact. By the preceding theorem, the Hilbert cube does not have the homotopy fixed point property.
\end{example}

It is unknown whether Theorem~\ref{thm:Banach_homotopy} generalizes to arbitrary topological vector spaces.

With a little technique, we are able to implant the previously constructed counterexamples in most finite-dimensional topological spaces. Recall that for~$n\geqslant0$, the~$n$-ball is the subspace~\hbox{$B^n=\{x\in\RR^n|\|x\|<1\}$} of euclidean space~$\RR^n$.

\begin{proposition}\label{prop:emb}
All topological spaces~$X$ admitting an open embedding~\hbox{$B^n\to X$} for some~$n\geqslant 1$ do not have the homotopy fixed point property.
\end{proposition}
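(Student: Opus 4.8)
The plan is to implant the interval counterexample of Example~\ref{ex:counterexample} into the Euclidean chart provided by the embedding. After composing with a homeomorphism we may take the embedding to be $e\colon\RR^n\to X$, with open image $U=e(\RR^n)$; write $Z=X\setminus U$ for the closed complement. I would build a continuous family $f\colon I\times X\to X$ that, inside $U$, is a compactly supported modification carrying the jumping fixed-point behaviour of Example~\ref{ex:counterexample}, and that is a single constant, lying in $U$, on all of $Z$. Compactness of the support inside the chart is precisely what makes such a patched family continuous.

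Concretely, fix distinct points $a\neq b$ in $\RR^n$, the straight segment $[a,b]$ and the nearest-point retraction $\rho\colon\RR^n\to[a,b]$. Carrying the family of Example~\ref{ex:counterexample} along a homeomorphism $[0,3]\cong[a,b]$ and reparametrising $[0,2]$ to $I=[0,1]$ yields a continuous family $H\colon I\times[a,b]\to[a,b]$ whose fixed-point set is a single point $q$ for $s<\tfrac12$, a proper subsegment of $[a,b]$ for $s=\tfrac12$, and a single point $p\neq q$ for $s>\tfrac12$. Pick a point $c$ on the line spanned by $a,b$ lying strictly beyond $b$, so that $c\notin[a,b]$, and a cutoff $\psi\colon\RR^n\to[0,1]$ that is $1$ on a large ball $D=\bar B(0,R)\supseteq[a,c]$ and $0$ outside $B(0,R+1)$. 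Then set
\[
g\colon I\times\RR^n\longrightarrow\RR^n,\qquad g_s(y)=\psi(y)\,H_s(\rho(y))+\bigl(1-\psi(y)\bigr)c,
\]
a continuous family with $g_s$ constantly equal to $c$ outside the compact set $K=\bar B(0,R+1)$. Since the values of $g_s$ lie in the convex hull $[a,c]$ of $[a,b]\cup\{c\}$, which sits inside $D$ where $\psi\equiv1$, and since $c\notin[a,b]$, one checks readily that $\mathrm{Fix}(g_s)=\mathrm{Fix}(H_s)$; in particular this set is $\{q\}$ for $s<\tfrac12$ and $\{p\}$ for $s>\tfrac12$. I then define $f_s=e\circ g_s\circ e^{-1}$ on $U$ and $f_s\equiv e(c)$ on $Z$; the two prescriptions agree on $U\setminus e(K)$, so $f$ is well defined, and (granting its continuity) $\mathrm{Fix}(f_s)=e(\mathrm{Fix}(g_s))$, because no point of $Z$ can be fixed. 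Hence $\mathrm{Fix}(f_s)=\{e(q)\}$ for $s<\tfrac12$ and $\{e(p)\}$ for $s>\tfrac12$ with $e(q)\neq e(p)$, so a continuous family $\sigma\colon I\to X$ of fixed points would be forced to be constantly $e(q)$ on $[0,\tfrac12)$ and constantly $e(p)$ on $(\tfrac12,1]$, which contradicts continuity at $\tfrac12$. Therefore $X$ does not have the homotopy fixed point property.

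The one point needing care, and the place I expect the real work to lie, is the continuity of the patched family $f$ at points of $Z$: continuity there amounts to $X\setminus e(K)$ being a neighbourhood of every point of $Z$, that is, to the compact support $e(K)$ being closed in $X$, equivalently $\overline{e(K)}\subseteq U$. For Hausdorff $X$ — in particular for all the manifolds and CW complexes to which the subsequent corollary is applied — this is automatic, and it is exactly why the support must be taken compact rather than merely closed in the chart; the open-embedding hypothesis is used only to supply the Euclidean chart, hence the room for the compactly supported construction. A general, possibly non-Hausdorff, $X$ requires a little extra care in how the active region of $g$ is placed inside the chart so that its image is separated from $Z$.
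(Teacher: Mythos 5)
Your construction is correct for Hausdorff~$X$, but it follows a genuinely different route from the paper. The paper never builds a homotopy on~$X$: it rescales the chart to an open embedding $e\colon\{\|x\|<3\}\to X$, uses the compactly supported fold $x\mapsto\lambda(\|x\|)x$ (which vanishes for $\|x\|\geqslant 2$) extended by zero to exhibit~$D^n$ as a retract of~$X$, and then concludes from Proposition~\ref{prop:XY_retracts} together with Theorem~\ref{thm:Banach_homotopy}. You instead transplant the jumping fixed-point family of Example~\ref{ex:counterexample} directly into the chart via the metric projection~$\rho$ and the cutoff~$\psi$, and check by hand that $\mathrm{Fix}(f_s)$ jumps from $\{e(q)\}$ to $\{e(p)\}$ across $s=\tfrac12$. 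This is more self-contained (no detour through the Banach-space theorem or the retract machinery) at the cost of more bookkeeping; the verification $\mathrm{Fix}(g_s)=\mathrm{Fix}(H_s)$ and the final contradiction are both sound, the latter because $\sigma(\tfrac12)$ is forced to lie in the Hausdorff open set~$U$, where no point has every neighbourhood containing both $e(p)$ and $e(q)$.

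The point you flag --- continuity of the glued family at points of~$Z$, equivalently closedness of~$e(K)$ in~$X$ --- is a genuine issue, and it cannot be repaired by ``a little extra care'': the proposition as stated fails for non-Hausdorff~$X$. Take $X=\RR\cup\{*\}$ with open sets the open subsets of~$\RR$ together with~$X$ itself. Then $B^1\cong\RR\hookrightarrow X$ is an open embedding; yet if a continuous self-map~$g$ sends~$*$ to some $r\in\RR$, then $g^{-1}(V)$ is an open set containing~$*$, hence equals~$X$, for every open $V\ni r$, so $g\equiv r$. Consequently, for any continuous $f\colon T\times X\to X$ the map sending~$t$ to~$*$ when $f(t,*)=*$ and to $f(t,*)$ otherwise is a continuous family of fixed points (note $\{*\}$ is closed and its only neighbourhood is~$X$), so this~$X$ even has the universal fixed point property. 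The paper's own proof has exactly the same defect: the extension by zero of $x\mapsto\lambda(\|x\|)x$ is continuous at points outside the chart only if $e(\{\|x\|\leqslant 2\})$ is closed in~$X$, which fails here. So the Hausdorff hypothesis (or something equivalent, such as the image of some closed ball being closed in~$X$) is needed for the statement itself, not merely for your argument; it does hold in all the applications of Corollary~\ref{cor:manifolds_and_CW_homotopy}.
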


\begin{proof}
By Propositions~\ref{prop:XY_retracts} and Theorem~\ref{thm:Banach_homotopy}, it suffices to show that~$D^n$ is a retract of the topological space~$X$. In order to prove this, we rescale the given open embedding so as to obtain another open embedding
\[
e\colon\{x\in\RR^n\,|\,\|x\|<3\}\longrightarrow X.
\]
Consider the continuous map~$\lambda\colon[0,3]\to[0,1]$ with
\[
\lambda(t)=
\begin{cases}
t & t\leqslant1\\
2-t & 1\leqslant t\leqslant 2\\
0 & 2\leqslant t.
\end{cases}
\]
Then the continuous map
\[
f\colon\{x\in\RR^n\,|\,\|x\|<3\}\longrightarrow D^n,\,x\longmapsto\lambda(\|x\|)x
\]
extends by zero over~$e$ to a continuous map~$r\colon X\to D^n$ which restricts to the identity on~$D^n$.
\end{proof}

\begin{corollary}\label{cor:manifolds_and_CW_homotopy}
Finite-dimensional topological manifolds and finite-dimen\-sio\-nal~CW complexs do not have the homotopy fixed point property, unless they are singletons.
\end{corollary}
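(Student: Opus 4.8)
The plan is to reduce both cases to Proposition~\ref{prop:emb}. That proposition says that any space admitting an open embedding of the open $n$-ball $B^n$ for some $n \geqslant 1$ fails the homotopy fixed point property, so it suffices to produce such an open embedding whenever the space in question is not a singleton. For a finite-dimensional topological manifold $M$ (of dimension $n$), either $n = 0$, in which case the connectedness of fixed point spaces (a corollary of Proposition~\ref{prop:retracts}, together with the fact that the homotopy fixed point property implies the ordinary fixed point property) forces $M$ to be a single point, or $n \geqslant 1$, in which case any chart gives an open embedding of $\RR^n$, hence of $B^n$, into $M$. That settles the manifold case.

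For a finite-dimensional CW complex $X$ that is not a singleton, I would argue as follows. If every cell of $X$ has dimension $0$, then $X$ is discrete; being a fixed point space (which it must be, since the homotopy fixed point property implies the fixed point property) and connected, it is a single point, contrary to assumption. Hence $X$ has a cell of dimension $n \geqslant 1$; choosing such a cell of \emph{top} dimension among those present, its characteristic map restricted to the open cell is a homeomorphism onto an open subset of the $n$-skeleton. The key point is that an open top-dimensional cell is in fact open in all of $X$: its complement is a subcomplex (the union of all other cells, using that no higher-dimensional cells exist to have this cell in their boundary), hence closed. This yields an open embedding $B^n \to X$, and Proposition~\ref{prop:emb} applies.

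The main obstacle is the CW argument, specifically the claim that a top-dimensional open cell is open in the whole complex. One must be slightly careful: in a general (not necessarily finite-dimensional) CW complex an open cell need not be open in the ambient space, since it can lie in the boundary of higher cells. Finite-dimensionality is exactly what rules this out: take $n$ maximal with $X^{(n)} \neq X^{(n-1)}$, so $X = X^{(n)}$ and every $n$-cell's open part is disjoint from the closure of every other cell of dimension $\leqslant n$ except along the $(n-1)$-skeleton, making the open cell's complement a closed subcomplex. One should also note the degenerate possibility that a ``non-singleton'' CW complex could still fail to be connected or $T_1$; but the homotopy fixed point property forces it to be a connected Kolmogorov space by Proposition~\ref{prop:Kolmogorov} and the connectedness corollary, and in the finite-dimensional CW setting connectedness plus having cells only in dimension $0$ already gives a point, so no separate low-dimensional pathology survives.
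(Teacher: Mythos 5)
Your proposal is correct and follows essentially the same route as the paper: reduce to Proposition~\ref{prop:emb} by exhibiting an open embedding of $B^n$ in the positive-dimensional case, and handle the zero-dimensional case by noting that a discrete space with the (homotopy) fixed point property is a singleton. Your extra care in justifying that a top-dimensional open cell of a finite-dimensional CW complex is open in the whole complex is a correct filling-in of a detail the paper leaves implicit.
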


\begin{proof}
If the dimension of a finite-dimensional topological manifold or a finite-dimen\-sio\-nal CW complex~$X$ is positive, then it admits an open embedding~\hbox{$B^n\to X$} as required by the preceding proposition. If the dimension is zero, then~$X$ is discrete, and a discrete space which has the homotopy fixed point property must be a singleton. 
\end{proof}


\section{The universal fixed point property}\label{sec:universal}

In this section we define the class of spaces where there is a continuous choice of fixed points whenever the self-map varies continuously.

\begin{definition}\label{def:ufpp}{\bf (Universal fixed point property)} We say that a topological space~$X$ has the {\it universal fixed point property} if it has the fixed point property with respect to all topological spaces~$T$.
\end{definition}

Clearly, the universal fixed point property implies the homotopy fixed point property, which implies the fixed point property. It is therefore a strong condition for a topological space to be a universal fixed point space. For example, we already know that a convex subspace~$X$ of a Banach space~$V$ does not have the universal fixed point property unless it is a singleton~(Theorem~\ref{thm:Banach_homotopy}). If~$X$ is a finite-dimensional topological manifold, or if~$X$ is a finite-dimensional~CW complex, then~$X$ is not a universal fixed point space, unless~$X$ is a singleton.~(Corollary~\ref{cor:manifolds_and_CW_homotopy}). 

In contrast to what these classes of non-examples suggest, singletons are not the only universal fixed point spaces, and there are even non-contractible examples. We identify these examples  using the concept of selection maps, to which we turn now.


\section{A brief review of mapping spaces}\label{sec:mapping space}

As we need some results on mapping spaces, let us briefly review these. See~\cite{Munkres} and~\cite{Laures+Szymik}, for example.


Let~$X$ and~$Y$ be topological spaces. The subsets
\[
\{f\colon X\to Y\,|\,f(K)\subseteq V\},
\]
for~$K\subseteq X$ compact, and~$V\subseteq Y$ open, generate a topology on the set~$C(X,Y)$ of continuous maps~$X\to Y$, the compact-open-topology. If~$X$ is a compact Hausdorff space, and~$Y$ is metric, then this is the topology of uniform convergence. Some useful properties of the compact-open-topology in general are as follows: A continuous map
\[
f\colon T\times X\longrightarrow Y
\]
defines for each~$t$ in~$T$ a continuous map
\[
f_t\colon X\longrightarrow Y,\,x\longmapsto f(t,x),
\]
from~$X$ to~$Y$. This yields a map
\[
f^{\#}\colon T\longrightarrow C(X,Y),\,t\longmapsto f_t,
\]
into the set~$C(X,Y)$, the so-called {\it adjoint} of~$f$, which is continuous when we equip the target with the compact-open-topology. The map
\[
C(T\times X,Y)\longrightarrow C(T,C(X,Y)),\,f\mapsto f^{\#},
\]
itself is clearly injective, but it is only surjective, for example, if~$X$ is a locally compact Hausdorff space. In particular, the evaluation map
\[
C(X,Y)\times X\longrightarrow Y,\, (f,x)\longmapsto f(x)
\]
is continuous when~$X$ is a locally compact Hausdorff space.

For every continuous family~$f\colon T\times X\to X$ of self-maps of~$X$ such that~$T$ is a locally compact Hausdorff space, there is a continuous self-map
\begin{equation}\label{eq:left_alternative}
C(T,X)\longrightarrow C(T,X),\,p\longmapsto(t\mapsto f(t,p(t))).
\end{equation}

This proposition is useful since it reduces questions about continuous families of fixed points to question about (ordinary) fixed points:

\begin{proposition}\label{prop:mapping_spaces}
If~$T$ is topological space which is a locally compact Hausdorff space, then a continuous family \hbox{$f\colon T\times X\to X$} of self-maps of~$X$ has a continuous family of fixed points if and only if the self-map~\eqref{eq:left_alternative} of the mapping space~$C(T,X)$ has a fixed point.
\end{proposition}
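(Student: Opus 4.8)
The plan is to exploit the adjunction between $C(T\times X,X)$ and $C(T,C(X,X))$, and more importantly the fact that a continuous family of fixed points is literally a fixed point of the induced self-map of $C(T,X)$. First I would spell out the correspondence: a continuous family of fixed points of $f$ is a continuous map $p\colon T\to X$ with $f(t,p(t))=p(t)$ for all $t$; this says exactly that $p$ is sent to itself by the assignment $p\mapsto(t\mapsto f(t,p(t)))$. So the whole content of the proposition is that this assignment $\Phi_f\colon C(T,X)\to C(T,X)$ is (i) well-defined, i.e. lands in continuous maps, and (ii) continuous as a self-map of $C(T,X)$ with the compact-open topology. Given those two facts, the two sides of the ``if and only if'' are tautologically the same statement, and there is nothing further to prove.

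For well-definedness of $\Phi_f$: given a continuous $p\colon T\to X$, the map $(\id_T,p)\colon T\to T\times X$ is continuous, and composing with $f\colon T\times X\to X$ gives $t\mapsto f(t,p(t))$, which is therefore continuous. No hypothesis on $T$ is needed here. For continuity of $\Phi_f$ itself, this is where the local compactness and Hausdorffness of $T$ enters. The cleanest route is to factor $\Phi_f$ through the evaluation map. Consider the composite
\[
C(T,X)\times T \xrightarrow{\ \mathrm{diag}\ } C(T,X)\times T\times T \xrightarrow{\ \ev\times\id_T\ } X\times T \xrightarrow{\ \mathrm{swap}\ } T\times X \xrightarrow{\ f\ } X,
\]
where $\ev\colon C(T,X)\times T\to X$ is the evaluation map, which is continuous precisely because $T$ is a locally compact Hausdorff space, as recalled in the excerpt. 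This composite is a continuous map $F\colon C(T,X)\times T\to X$ with $F(p,t)=f(t,p(t))$. Taking its adjoint $F^{\#}\colon C(T,X)\to C(T,X)$ — again continuous for the compact-open topology on the target, with no hypothesis beyond what we already have — yields exactly $\Phi_f$. Hence $\Phi_f$ is continuous.

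The main obstacle, such as it is, is the bookkeeping around the evaluation map and the adjunction: one must be careful that the relevant continuity statements for $\ev$ and for passing to adjoints require $T$ to be locally compact Hausdorff (for $\ev$) but nothing for the adjoint direction, and one must assemble the composite $F$ correctly so that its adjoint is literally $\Phi_f$ and not some variant. Once $F$ is written down, the argument is formal. I would close by remarking that the displayed self-map in the statement, equation~\eqref{eq:left_alternative}, is exactly this $\Phi_f=F^{\#}$, so that ``$p$ is a continuous family of fixed points of $f$'' and ``$p$ is a fixed point of~\eqref{eq:left_alternative}'' are the same condition on $p\in C(T,X)$, which gives the equivalence of their existence and completes the proof.
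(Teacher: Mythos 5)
Your proof is correct and takes essentially the same route as the paper: the equivalence itself is the tautological observation that a fixed point of the self-map~\eqref{eq:left_alternative} of~$C(T,X)$ is precisely a continuous map $p\colon T\to X$ with $f(t,p(t))=p(t)$. Your additional verification that~\eqref{eq:left_alternative} is a well-defined continuous self-map (via the evaluation map, using that $T$ is locally compact Hausdorff) is a welcome supplement, since the paper asserts this just before the proposition without proof.
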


\begin{proof}
This result follows immediately from spelling out the definitions. If the self-map~\eqref{eq:left_alternative} has a fixed point~$p$, then~$p\colon T\to X$ is a continuous map such that~\hbox{$f(t,p(t))=p(t)$}, and this is Definition~\ref{def:continuous_fixed_point} of a continuous family of fixed points of~$f$. And conversely, a continuous family of fixed points is a fixed point of that self-map.
\end{proof}

\begin{corollary}\label{cor:fpp_implies_cfp}
For all locally compact Hausdorff spaces~$T$ such that the mapping space~$C(T,X)$ is a fixed point space, the space~$X$ has the fixed point property with respect to~$T$.
\end{corollary}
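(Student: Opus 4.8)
The plan is to deduce this directly from Proposition~\ref{prop:mapping_spaces}, which is exactly the tool that translates ``continuous family of fixed points'' into ``ordinary fixed point of a self-map of the mapping space.''

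First I would fix an arbitrary continuous family of self-maps $f\colon T\times X\to X$ and observe that, because $T$ is a locally compact Hausdorff space, the displayed assignment~\eqref{eq:left_alternative} is a genuine continuous self-map of the mapping space $C(T,X)$ --- this is precisely the point of the paragraph preceding Proposition~\ref{prop:mapping_spaces}, so no further work is needed there. Then I would invoke the hypothesis that $C(T,X)$ is a fixed point space to produce a fixed point $p\in C(T,X)$ of that self-map.

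Next I would feed this $p$ back through Proposition~\ref{prop:mapping_spaces}: a fixed point of~\eqref{eq:left_alternative} is the same datum as a continuous map $p\colon T\to X$ satisfying $f(t,p(t))=p(t)$ for all $t\in T$, i.e. a continuous family of fixed points of $f$. Since $f$ was an arbitrary continuous family of self-maps of $X$ parametrized by $T$, this shows by Definition~\ref{def:ffpT} that $X$ has the fixed point property with respect to $T$.

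I do not expect a genuine obstacle here: the statement is a formal consequence of Proposition~\ref{prop:mapping_spaces} together with unwinding Definition~\ref{def:ffpT}. The only point that deserves a word of care is the standing local-compactness-and-Hausdorffness assumption on $T$, which is what legitimizes forming the self-map~\eqref{eq:left_alternative} and hence applying Proposition~\ref{prop:mapping_spaces}; beyond that the argument is immediate.
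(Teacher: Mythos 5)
Your argument is correct and is exactly the intended deduction: the paper states this as an immediate corollary of Proposition~\ref{prop:mapping_spaces} without writing out a proof, and your unwinding (form the self-map~\eqref{eq:left_alternative}, use that $C(T,X)$ is a fixed point space, translate the fixed point back via the proposition) is precisely that argument.
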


Not all self-maps of~$C(T,X)$ need to have the form~\eqref{eq:left_alternative}, so that we cannot infer here that the converse of the corollary also holds.

\begin{example}
For the unit interval~$I=[0,1]$, the space~$C(I,I)$ of self-maps is not a fixed point space. This claim follows immediately from Corollary~\ref{cor:fpp_implies_cfp} and Example~\ref{ex:counterexample}. But, of course, it is also easy to see this explicitly. For example, the continuous self-map~$f\colon C(I,I)\to C(I,I)$ such that~$f(v)\colon t\mapsto tv(t)$ does not have a fixed point.
\end{example}

It is useful to know that if~$X$ is a retract of~$Y$, then~$C(X,X)$ is a retract of~$C(Y,Y)$. We  omit the proof, as we do not need this result.


\section{Selection maps}

In this section, we give a criterion for topological spaces to have the universal fixed point property. Here is the definition that lays the basis for it.

\begin{definition}
Let~$X$ be a topological space. We call a continuous map
\[
\Phi\colon C(X,X)\longrightarrow X
\]
with the property that~$\Phi(f)$ is a fixed point of~$f$,
\[
f(\Phi(f))=\Phi(f),
\] 
for all~$f$ in~$C(X,X)$, a {\it selection map} for~$X$.
\end{definition}

A selection map for~$X$ continuously selects a fixed point~$\Phi(f)$ for every self-map~$f$ of~$X$. A similar concept has been introduced (much earlier, but independently) in a related context in~\cite{products}.

\begin{proposition}\label{prop:products}
Let~$X$ be a topological space that admits a selection map. Then the product~$X\times Y$ is a fixed point space for all fixed point spaces~$Y$, and it admits a selection map for all~$Y$ that admit selection maps.
\end{proposition}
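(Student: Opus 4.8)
The plan is to reduce a fixed-point problem on $X\times Y$ to one on $Y$ alone, using the selection map of $X$ to dispose of the first coordinate. Write a continuous self-map of $X\times Y$ as $g=(g_1,g_2)$ with $g_1\colon X\times Y\to X$ and $g_2\colon X\times Y\to Y$. First I would regard $g_1$ as a continuous family of self-maps of $X$ parametrized by $Y$; its adjoint is a continuous map $Y\to C(X,X)$, and post-composing with the selection map $\Phi$ of $X$ produces a continuous map $\sigma\colon Y\to X$ with $g_1(\sigma(y),y)=\sigma(y)$ for all $y$. Then $y\mapsto g_2(\sigma(y),y)$ is a continuous self-map of $Y$. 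If $Y$ is a fixed point space, this self-map has a fixed point $y_0$, and then $(\sigma(y_0),y_0)$ is a fixed point of $g$, because $g_1(\sigma(y_0),y_0)=\sigma(y_0)$ and $g_2(\sigma(y_0),y_0)=y_0$. This proves the first assertion.

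For the second assertion, assume in addition that $Y$ admits a selection map $\Psi$. I would run the same construction for a varying $g$, writing $\sigma_g\colon Y\to X$ for the map produced above and $h_g\colon Y\to Y$ for the self-map $y\mapsto g_2(\sigma_g(y),y)$, and then replace the bare appeal to the fixed point property of $Y$ by an application of $\Psi$: set $y_0(g):=\Psi(h_g)$ and
\[
\Xi(g):=\bigl(\sigma_g(y_0(g)),\,y_0(g)\bigr).
\]
By the computation in the first part, $\Xi(g)$ is again a fixed point of $g$, so the only remaining point is that $\Xi\colon C(X\times Y,X\times Y)\to X\times Y$ is continuous.

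That continuity is the crux, and I expect it to be the main obstacle — more a matter of careful bookkeeping than of a new idea. The strategy is to display $\Xi$ as a composite of maps that are continuous for the compact-open topology: $g\mapsto g_1$ and $g\mapsto g_2$ are post-compositions with the projections; passing from $g_1$ to its adjoint $Y\to C(X,X)$ is continuous; post-composing with $\Phi$ gives $g\mapsto\sigma_g$ continuously into $C(Y,X)$; and then $g\mapsto h_g\in C(Y,Y)$ and $g\mapsto\sigma_g(y_0(g))\in X$ are assembled from pairing maps, composition maps and evaluation maps, followed by post-composition with $\Psi$. The sensitive steps are precisely those where $\sigma_g(y)$ is fed back into $g_2$ and where $\sigma_g$ is evaluated at $y_0(g)$; here one invokes the continuity of composition and of evaluation for the compact-open topology, as recorded in Section~\ref{sec:mapping space}. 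Once this is checked, the fixed-point identity $g(\Xi(g))=\Xi(g)$ holds by the same elementary computation used in the first part, and a selection map for $X\times Y$ has been produced.
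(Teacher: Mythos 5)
Your argument is essentially the paper's own proof in different notation: your $\sigma(y)$ is the paper's $\Phi(f_{X,y})$, your $h_g$ is its $f^\Phi_Y$, and the fixed point $(\sigma(y_0),y_0)$ is exactly the pair $(x,y)$ constructed there. If anything, you are more explicit than the paper about the continuity of the assembled selection map $\Xi$ (the paper dismisses this with ``as in the first part of the proof''), and the concerns you flag about composition and evaluation in the compact-open topology are the right ones.
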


This result corresponds to and generalizes Theorem 1 in {\it loc.~cit.}. Before we begin with the proof, let us introduce some notation in connection with a continuous self-map
\[
f\colon X\times Y\longrightarrow X\times Y
\]
on a space that is a product of two other spaces~$X$ and~$Y$. For given~$x$ in~$X$ and~$y$ in~$Y$, there are continuous maps
\[
f_{X,y}\colon X\longrightarrow X, x\longmapsto\pr_Xf(x,y),
\]
and similarly~$f_{x,Y}\colon Y\to Y$. The pair~$(x,y)$ is a fixed point of~$f$ if and only if both~$x$ is a fixed point of~$f_{X,y}$ and~$y$ is a fixed point of~$f_{x,Y}$. The two maps depend continuously on the given points, and this defines maps
\[
f_{X,?}\colon Y\longrightarrow C(X,X), y\longmapsto f_{X,y},
\]
and similarly~$f_{?,Y}\colon X\to C(Y,Y)$.

\begin{proof}
We choose a selection map~$\Phi\colon C(X,X)\to X$ for~$X$.

Let~$f\colon X\times Y\to X\times Y$ be a continuous self-map. We have to show that it has a fixed point. To do so, consider the self-map
\[
f^\Phi_Y\colon Y\longrightarrow Y,y\longmapsto f_{\Phi(f_{X,y}),Y}(y).
\]
If~$Y$ is a fixed point space, then this self-map has a fixed point~$y$. Then~$x=\Phi(f_{X,y})$ is a fixed point of~$f_{X,y}$ and~$y$ is a fixed point of~$f_{x,Y}$, so that~$(x,y)$ is a fixed point of~$f$, as required.

If~$Y$ also has a selection map~$\Psi$, then we use it to define~$y=\Psi(f^\Phi_Y)$ as a function of~$f$, and produce a selection map for~$X\times Y$ from that, as in the first part of the proof.
\end{proof}

It follows from Proposition~\ref{prop:products} that topological spaces with a selection map are  fixed point spaces (take~$Y$ a singleton). But, more it true: As the main result of this section, we present a useful criterion for a topological space to have the universal fixed point property. 

\begin{theorem}\label{thm:criterion}
All topological spaces~$X$ that admits a selection map have the universal fixed point property. The converse holds for all topological spaces~$X$ for which the evaluation map~\hbox{$C(X,X)\times X\to X$} is continuous.
\end{theorem}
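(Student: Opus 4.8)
The plan is to prove the two implications separately. For the forward direction, suppose $X$ admits a selection map $\Phi\colon C(X,X)\to X$. Given an arbitrary topological space $T$ and a continuous family $f\colon T\times X\to X$ of self-maps, I would form the adjoint $f^{\#}\colon T\to C(X,X)$, which is continuous for the compact-open topology by the review in Section~\ref{sec:mapping space}. Then $p=\Phi\circ f^{\#}\colon T\to X$ is a composition of continuous maps, hence continuous, and for each $t$ in $T$ the point $p(t)=\Phi(f_t)$ is a fixed point of $f_t$ by the defining property of a selection map, i.e.\ $f(t,p(t))=f_t(p(t))=p(t)$. Thus $p$ is a continuous family of fixed points, so $X$ has the fixed point property with respect to $T$; since $T$ was arbitrary, $X$ has the universal fixed point property. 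The key point here is simply that the adjunction turns a family into a single map into $C(X,X)$, on which the selection map can be applied pointwise; no local compactness of $T$ is needed because we only use the easy direction of the adjunction (continuity of $f^{\#}$).

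For the converse, assume $X$ has the universal fixed point property and that the evaluation map $\ev\colon C(X,X)\times X\to X$ is continuous. I would exhibit a selection map by applying the universal fixed point property to the specific parameter space $T=C(X,X)$. Consider the continuous family
\[
F\colon C(X,X)\times X\longrightarrow X,\qquad F(g,x)=g(x),
\]
which is exactly the evaluation map, continuous by hypothesis. It is a continuous family of self-maps of $X$ parametrized by $T=C(X,X)$, with $F_g=g$. By the universal fixed point property, $F$ admits a continuous family of fixed points $\Phi\colon C(X,X)\to X$, meaning $F(g,\Phi(g))=\Phi(g)$, i.e.\ $g(\Phi(g))=\Phi(g)$, for all $g$ in $C(X,X)$. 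This is precisely the statement that $\Phi$ is a selection map for $X$.

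The step I expect to be the crux is the converse, and specifically the role of the continuity hypothesis on the evaluation map: without it, the assignment $(g,x)\mapsto g(x)$ need not be a continuous family of self-maps, so the universal fixed point property cannot be applied to it, and there is no other obvious way to manufacture a globally continuous choice of fixed points indexed by the whole mapping space. The forward direction is essentially formal, relying only on the continuity of the adjoint, while the converse is where the tautological-looking family $F=\ev$ is doing the real work, and it is exactly the hypothesis on $\ev$ that makes this family legitimate.
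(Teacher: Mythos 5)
Your proposal is correct and follows essentially the same route as the paper: the forward direction composes the selection map with the continuous adjoint $f^{\#}\colon T\to C(X,X)$, and the converse applies the universal fixed point property to the evaluation map viewed as a family parametrized by $T=C(X,X)$. Your added remark that no local compactness of $T$ is needed because only the easy direction of the adjunction is used is a correct and worthwhile clarification, but the argument itself matches the paper's.
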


\begin{proof}
Let~$f\colon T\times X\to X$ be a continuous family of self-maps of~$X$ parametrized by some topological space, and let again~$f^{\#}\colon T\to C(X,X)$ denote its adjoint. Then it is easy to check that the composition~$p=\Phi f^{\#}$ is a continuous family of fixed points of~$f$.

If~$X$ is a universal fixed point space, then there exists a continuous family~$p$ of fixed points for each continuous family~$f$ of self-maps of~$X$. If the evaluation map is continuous, then we take it as~$f$, and see that it has a continuous family~$C(X,X)\to X$ of fixed points, which we denote by~$\Phi$. The fixed point equation~$f(t,p(t))=p(t)$ reads~$\ev(g,\Phi(g))=\Phi(g)$ in this case, which means~$g(\Phi(g))=\Phi(g)$ for all~$g$, as desired.
\end{proof}

\begin{corollary}
A locally compact Hausdorff space~$X$ has the universal fixed point property if and only if it has the fixed point property with respect to the space~$C(X,X)$ of self-maps of itself.
\end{corollary}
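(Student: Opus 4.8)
The plan is to deduce both implications of the biconditional from Theorem~\ref{thm:criterion}, using the fact that for a locally compact Hausdorff space~$X$ the evaluation map~$\ev\colon C(X,X)\times X\to X$ is continuous, as recalled in Section~\ref{sec:mapping space}.

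The forward implication needs no work: if~$X$ has the universal fixed point property, then by Definition~\ref{def:ufpp} it has the fixed point property with respect to every topological space, hence in particular with respect to~$T=C(X,X)$.

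For the converse, I would start from the hypothesis that~$X$ has the fixed point property with respect to~$C(X,X)$ and feed the evaluation map into it. Since~$X$ is locally compact Hausdorff, $\ev\colon C(X,X)\times X\to X$ is continuous, so it is a continuous family of self-maps of~$X$ parametrized by~$T=C(X,X)$ in the sense of Definition~\ref{def:cfs-m}. By hypothesis it admits a continuous family of fixed points~$\Phi\colon C(X,X)\to X$. Unwinding Definition~\ref{def:continuous_fixed_point}, the defining equation~$\ev(g,\Phi(g))=\Phi(g)$ is exactly~$g(\Phi(g))=\Phi(g)$ for all~$g$ in~$C(X,X)$, which says that~$\Phi$ is a selection map for~$X$. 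The first part of Theorem~\ref{thm:criterion} then gives that~$X$ has the universal fixed point property, completing the proof.

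I do not anticipate a genuine obstacle here: the whole content is the observation that the fixed point property with respect to~$C(X,X)$, applied to the evaluation family, produces precisely a selection map, and local compactness is used only to guarantee that the evaluation map is continuous and therefore qualifies as a continuous family of self-maps in the first place.
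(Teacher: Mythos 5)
Your proof is correct and is essentially the argument the paper intends: the corollary is stated without proof precisely because it follows from Theorem~\ref{thm:criterion} in the way you describe, with the converse direction reproducing the second half of that theorem's proof (the evaluation family yields a selection map) and then invoking the first half. Local compactness is indeed used only to make the evaluation map continuous, exactly as you note.
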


With these criteria at hand, we now proceed to present many examples of universal fixed point spaces.


\section{Spaces from partially ordered sets}

By Proposition~\ref{prop:Kolmogorov}, every fixed point space is a Kolmogorov space. In this section, we demonstrate the difficulties one encounters in the study of universal fixed point spaces by looking at examples with only finitely many points.

The category of finite Kolmogorov spaces and continuous maps is isomorphic to the category of finite partially ordered sets and monotone maps, and the isomorphism is the identity on underlying sets. See~\cite{Birkhoff:Third},  and~\cite[Proposition~7]{Stong}. Briefly, if~$X$ is a Kolmogorov space, and~$x$ is an element in~$X$, define~$U(x)$ to be the intersection of the open subsets of~$X$ which contain~$x$. When we write~$x\leqslant x'$ for~$U(x)\subseteq U(x')$, then this defines a partial order~$\leqslant$ on~$X$, and~\hbox{$U(x')=\{x\in X\,|\,x\leqslant x'\}$}. Conversely, if~$\leqslant$ is a partial order on~$X$, then we define the Kolmogorov topology on~$X$ in such a way that the open subsets~$U$ are those subsets which satisfy the property: if~$x\leqslant x'$ and~$U$ contains~$x'$, then it contains~$x$ as well.


In the context of posets, selection maps have been introduced in~\cite{products}. We now record that this concept precisely captures the universal fixed point property for the corresponding class of spaces.

\begin{proposition}\label{prop:poset_char}
A finite poset admits a selection map if and only if its associated Kolmogorov space is a universal fixed point space.
\end{proposition}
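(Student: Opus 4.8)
The plan is to apply Theorem~\ref{thm:criterion} in both directions, and the only thing that needs checking is that for a finite poset~$X$, the evaluation map $C(X,X)\times X\to X$ is continuous; granting this, Theorem~\ref{thm:criterion} says that $X$ admits a selection map if and only if $X$ has the universal fixed point property, which is exactly the assertion. So the real content is the claim that finite Kolmogorov spaces are "exponentiable enough" for the evaluation map to be continuous.

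First I would recall that the evaluation map is continuous as soon as $X$ is locally compact Hausdorff, as noted in Section~\ref{sec:mapping space}; but a finite Kolmogorov space is typically not Hausdorff, so that criterion does not directly apply. Instead I would argue directly using the finiteness. A finite topological space has only finitely many open sets, hence every subset (in particular $X$ itself) is compact, and the compact-open topology on $C(X,X)$ is then generated by the subbasic sets $\{f\mid f(\{x\})\subseteq U(x')\}$ as $x,x'$ range over $X$ — here using that the sets $U(x')$ of Section~\ref{sec:pos} (the minimal open neighbourhoods) form a basis and that it suffices to test compacts of the form $\{x\}$ when every singleton is compact, which it is. So $C(X,X)$ is again a finite topological space (its underlying set is finite since $X$ is).

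Next I would verify continuity of $\ev\colon C(X,X)\times X\to X$ at the level of the order. Since all three spaces are finite Kolmogorov, a map out of a product of finite Kolmogorov spaces is continuous if and only if it is monotone in each variable separately — equivalently, monotone for the product order, because the product in the category of finite Kolmogorov spaces is the product poset (this follows from the isomorphism of categories recalled before Proposition~\ref{prop:poset_char}, products being computed the same way on both sides). Monotonicity of $\ev$ in the second variable is just that each $f\in C(X,X)$ is monotone. Monotonicity in the first variable amounts to: if $f\leqslant g$ in $C(X,X)$ then $f(x)\leqslant g(x)$ for all $x$ — but the order on $C(X,X)$ induced by the compact-open topology is precisely the pointwise order $f\leqslant g\iff U(f)\subseteq U(g)\iff (\forall x)\,f(x)\leqslant g(x)$, which I would extract from the subbasis description above. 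Hence $\ev$ is monotone, therefore continuous, and Theorem~\ref{thm:criterion} applies verbatim.

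The main obstacle is the middle step: pinning down that the compact-open topology on $C(X,X)$ for finite $X$ is the pointwise (product) order topology, and in particular that $C(X,X)$ is itself a finite Kolmogorov space whose specialization order is the pointwise order. Once that identification is in hand the rest is bookkeeping with monotone maps. One should also not forget the trivial edge cases $X=\emptyset$ and $X$ a singleton, where both sides of the equivalence hold for obvious reasons; these can be folded into the general argument or dispatched in a sentence.
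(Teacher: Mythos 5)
Your proposal is correct and follows the same route as the paper: identify the compact-open topology on $C(X,X)$ with the Kolmogorov topology of the pointwise order, observe that evaluation is then order-preserving hence continuous, and invoke Theorem~\ref{thm:criterion} in both directions. The only difference is that you verify the identification of the compact-open topology directly from finiteness, whereas the paper simply cites Stong's Proposition~9 for this fact.
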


\begin{proof}
This follows from Theorem~\ref{thm:criterion}. The compact-open-topology on the set~$C(X,X)$ of continuous self-maps is the Kolmogorov topology associated with the point-wise partial order on maps:~\hbox{$f\leqslant g$} if and only if~\hbox{$f(x)\leqslant g(x)$} for all~$x$ in~$X$. See~\cite[Proposition 9]{Stong}. With respect to this order, evaluation is order preserving, hence continuous.
\end{proof}

The existence of selection maps has been shown to be indeed stronger than the fixed point property in~\cite{strong}. From our perspective, this comes as no surprise, since there are examples of fixed point spaces that are not universal. We now present examples of universal fixed point spaces that are not contractible.


Rival, in~\cite{Rival}, has introduced a notion of dismantlability (by irreducibles) for finite posets, and this has been extended to arbitrary posets in~\cite{Baclawski+Bjorner}. We do not recall the definitions here, but rather record a characterization, which follows from earlier results in~\cite{Stong}.

\begin{proposition} 
A finite poset is dismantlable if and only if its associated Kolmogorov space is contractible. 
\end{proposition}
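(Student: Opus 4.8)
The plan is to establish the equivalence ``dismantlable $\iff$ contractible'' by citing and unwinding Stong's analysis of the homotopy type of finite topological spaces. First I would recall the notion of a \emph{linear point} (also called an \emph{irreducible} or \emph{beat point}): a point $x$ in a finite poset $P$ is linear if either the set of elements strictly below $x$ has a maximum, or the set of elements strictly above $x$ has a minimum. Stong's key lemma is that if $x$ is a linear point, then the inclusion $P\setminus\{x\}\hookrightarrow P$ is a deformation retract of the associated Kolmogorov spaces: in the poset language, there is a monotone retraction $r\colon P\to P\setminus\{x\}$ sending $x$ to the maximum below it (or minimum above it), and $r$ is comparable to the identity in the pointwise order, hence the straight-line homotopy through $C(P,P)$ — equivalently, the fact that comparable maps are homotopic as maps of Kolmogorov spaces — shows $r$ is a homotopy inverse to the inclusion.

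The forward direction then goes as follows. Dismantlability means that one can delete linear points one at a time until only a single point remains. By the lemma just recalled, each deletion is a homotopy equivalence, and a composite of finitely many homotopy equivalences is a homotopy equivalence; since the endpoint is a singleton, $P$ is contractible. For the converse I would invoke Stong's classification of the minimal finite spaces within a homotopy type: every finite poset $P$ admits a \emph{core}, obtained by repeatedly deleting linear points until none remain, and Stong proves that two finite posets are homotopy equivalent if and only if their cores are isomorphic. In particular, a finite poset is contractible if and only if its core is the one-point poset, which is exactly the statement that $P$ is dismantlable.

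The steps in order: (1) define linear points and state the deformation-retract lemma (citing \cite{Stong}); (2) prove the forward implication by iterating the lemma along a dismantling sequence; (3) recall the existence and uniqueness (up to isomorphism) of the core; (4) deduce that contractibility forces the core to be a point, giving the reverse implication. One should also note the degenerate case that the empty poset is neither dismantlable nor contractible (its Kolmogorov space is empty, hence not contractible), so the statement is consistent there, and a singleton is trivially both.

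The main obstacle is not any single hard argument but rather making precise the bridge between the order-theoretic operation (deleting a linear point) and the topological conclusion (deformation retract), i.e.\ the fact that comparable continuous maps between Kolmogorov spaces are homotopic — this is where the ``finite'' hypothesis and the explicit description of the Kolmogorov topology via the sets $U(x)$ are used, and it is the step most likely to need care to state cleanly. Everything else is bookkeeping over a finite induction, and since the paper explicitly says it does not recall the definition of dismantlability, the cleanest presentation is to defer all of this to \cite{Stong} and \cite{Rival}, \cite{Baclawski+Bjorner} and give only the short two-direction argument sketched above.
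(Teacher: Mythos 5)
Your argument is correct and is essentially the one the paper intends: the paper offers no proof of its own, stating only that the equivalence ``follows from earlier results in \cite{Stong}'', and your sketch is exactly the standard unwinding of those results (beat/irreducible points give deformation retracts because comparable maps of finite Kolmogorov spaces are homotopic, and contractibility is detected by the core being a point). The only point worth double-checking in a written version is that Rival's ``irreducible'' points coincide with Stong's beat points for finite posets --- they do, since in a finite poset a unique upper cover is automatically the minimum of the set of elements strictly above.
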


All dismantlable posets admit a selection map, see~\cite{products}. Or, all contractible finite Kolmogorov spaces are universal fixed point spaces. For example, the Sierpi\'nski space~$S=\{\alpha,\omega\}$ with open subsets~$\emptyset$,~$\{\omega\}$, and~$S$ is a universal fixed point space, as is any Kolmogorov space associated with a finite poset that has a minimal or maximal element. However, the converse is not true:  Not all finite Kolmogorov spaces that are universal fixed point spaces are also contractible. Here is an example.

\begin{example}\label{ex:not_contractible}
Rival, also in~\cite{Rival}, has given an example of a finite poset with the fixed point property that is not dismantlable, so that its associated Kolmogorov space is not contractible. The list in~\cite{fpp_small_sets} contains some more examples. All these examples even admit selection maps by~\cite{strong_fpp_small_sets}. 
\tikzstyle{dot}=[circle,draw,fill,scale=0.5]
\tikzstyle{line}=[-,very thick]

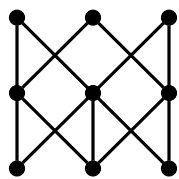
\begin{figure}[H]
\caption{A poset with a selection map that is not dismantlable}
\begin{center}
\begin{tikzpicture}
\node[dot] (1) at (0,0) {};
\node[dot] (2) at (0,1) {};
\node[dot] (3) at (0,2) {};
\node[dot] (4) at (1,0) {};
\node[dot] (5) at (1,1) {};
\node[dot] (6) at (1,2) {};
\node[dot] (7) at (2,0) {};
\node[dot] (8) at (2,1) {};
\node[dot] (9) at (2,2) {};

\draw[line] (1) to (3) ;
\draw[line] (7) to (9) ;
\draw[line] (1) to (9) ;
\draw[line] (7) to (3) ;
\draw[line] (2) to (4) ;
\draw[line] (2) to (6) ;
\draw[line] (8) to (4) ;
\draw[line] (8) to (6) ;
\draw[line] (4) to (5) ;
\end{tikzpicture}
\end{center}

\end{figure}Therefore, the associated Kolmogorov spaces have the universal fixed point property by Proposition~\ref{prop:poset_char}
\end{example}


\section*{Acknowledgments}

This research has been supported by the Danish National Research Foundation through the Centre for Symmetry and Deformation (DNRF92). I also thank Michal Kukiela for a helpful correspondence that brought the references in Order to my attention.



\vfill

\parbox{\linewidth}{%
{\it Affiliation:}\\
Mathematisches Institut\\Heinrich-Heine-Universit\"at\\40225 D\"usseldorf\\Germany\\\phantom{ }

{\it Current address:}\\
Department of Mathematical Sciences\\University of Copenhagen\\2100 Copenhagen~\O\\Denmark\\\phantom{ }

{\it Email address:}\\
szymik@math.ku.dk
}

\end{document}